\newtheorem{thm}{Theorem}
\newtheorem{cor}{Corollary}
\newtheorem*{ident*}{Identity}
\begin{document}

\title{Proofs of Power Sum and Binomial Coefficient Congruences Via Pascal's Identity}
\author{Kieren MacMillan and Jonathan Sondow}
\date{}
\maketitle

\begin{abstract}
A well known and frequently cited congruence for power sums is
\begin{equation*}
	1^n + 2^n + \dotsb + p^n \equiv
	\begin{cases}
		\, -1 \pmod{p} \, \text{ if }\, p-1 \mid n, \\
		\, \hspace{0.75em} 0 \pmod{p} \, \text{ if } \, p-1 \nmid n,
	\end{cases}
\end{equation*}
where $n\ge1$ and $p$ is prime. We survey the main ingredients in several known proofs. Then we give an elementary proof, using an identity for power sums proven by Pascal in $1654$. An application is a simple proof of a congruence, due to Hermite and Bachmann, for certain sums of binomial coefficients.
\end{abstract}

\noindent In the literature on \emph{power sums}
\begin{equation}
	S_n(a) :=  \sum_{j=1}^a j^n = 1^n + 2^n + \dotsb + a^n \qquad (n \ge 0,\, a\ge1), \label{EQ: def sigma}
\end{equation}
the following congruence is well known and is frequently cited.

\begin{thm}  \label{THM: HW119}
Let $p$ be a prime. For $n\ge1$, we have
\begin{align*}
	S_n(p) \equiv
	\begin{cases}
		\, -1 \pmod{p} \, \textit{ if }\, p-1 \mid n, \\
		\, \hspace{0.75em} 0 \pmod{p} \, \textit{ if } \, p-1 \nmid n.
	\end{cases}
\end{align*}
\end{thm}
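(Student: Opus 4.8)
The plan is to extract from Pascal's identity one congruence modulo $p$, run an induction that determines $S_n(p)\bmod p$ for $1\le n\le p-2$, and then use Fermat's little theorem both to handle the exponent $n=p-1$ and to propagate the answer periodically to all $n\ge1$.

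I would start from Pascal's $1654$ identity, in the form
\[
	(a+1)^{n+1}-1=\sum_{k=0}^{n}\binom{n+1}{k}S_k(a),
\]
and specialize it to $a=p$. Since $(p+1)^{n+1}\equiv1\pmod p$ and $S_0(p)=p\equiv0\pmod p$, the $k=0$ term drops and the identity collapses modulo $p$ to
\[
	\sum_{k=1}^{n}\binom{n+1}{k}S_k(p)\equiv0\pmod p\qquad(n\ge1).
\]
This single congruence is the only way Pascal's identity enters the argument.

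Next I would induct on $n$ through the range $1\le n\le p-2$. For such $n$ the leading coefficient $\binom{n+1}{n}=n+1$ satisfies $2\le n+1\le p-1$, so it is invertible mod $p$. Assuming $S_k(p)\equiv0\pmod p$ for all $k$ with $1\le k<n$ (an empty hypothesis when $n=1$), every term with $1\le k<n$ in $\sum_{k=1}^{n}\binom{n+1}{k}S_k(p)\equiv0\pmod p$ vanishes, leaving $(n+1)S_n(p)\equiv0\pmod p$, hence $S_n(p)\equiv0\pmod p$. (When $p=2$ this range is empty, in agreement with $p-1=1$ dividing every $n$.) Separately, Fermat's little theorem gives $j^{p-1}\equiv1\pmod p$ for $1\le j\le p-1$, so $S_{p-1}(p)\equiv p-1\equiv-1\pmod p$, and $j^{p}\equiv j\pmod p$ for all $j$ yields $S_{n+p-1}(p)\equiv S_n(p)\pmod p$ for every $n\ge1$. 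Writing an arbitrary $n\ge1$ as $n=q(p-1)+r$ with $1\le r\le p-1$ and applying this periodicity $q$ times reduces $S_n(p)\bmod p$ to $S_r(p)\bmod p$, with $r=p-1$ precisely when $p-1\mid n$; together with the two computations just made, this gives Theorem~\ref{THM: HW119}.

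The crux, I expect, is the passage out of the range $n\le p-2$. The Pascal recursion pins down $S_n(p)\bmod p$ only so long as its leading coefficient $\binom{n+1}{n}=n+1$ is a unit mod $p$, and at $n=p-1$ one has $\binom{p}{p-1}=p\equiv0$, so the recursion stalls at exactly the first multiple of $p-1$. Escaping this needs an ingredient from outside Pascal's identity, and Fermat's little theorem serves twice over -- to evaluate $S_{p-1}(p)$ and to supply the periodicity in $n$; pushing the recursion further instead would require evaluating the sums $\sum_{p-1\mid k}\binom{n+1}{k}\bmod p$, which is essentially the Hermite--Bachmann binomial congruence that the paper means to deduce from Theorem~\ref{THM: HW119}, not to assume.
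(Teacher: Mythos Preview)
Your proof is correct and essentially identical to the paper's: both reduce Pascal's identity at $a=p$ modulo~$p$, use that $n+1$ is a unit for $1\le n\le p-2$ to force $S_n(p)\equiv0$ by strong induction, and invoke Fermat's little theorem both for the value at multiples of $p-1$ and for the periodicity $S_n(p)\equiv S_r(p)$. The only cosmetic difference is that the paper packages the induction as a minimal-counterexample argument rather than as a direct induction followed by periodicity.
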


For example, it is used to prove theorems on Bernoulli numbers (that of von Staudt-Clausen in \cite{carlitz}, \cite[Theorem~118]{hw}, of Carlitz-von Staudt in \cite[Theorem~4]{carlitz}, \cite{tophat}, and of Almkvist-Meurman in \cite[Theorem~9.5.29]{cohen}) and to study the Erd\H{o}s-Moser Diophantine equation \mbox{$S_n(m-1)=m^n$} (in \cite{mpim}, \cite{tophat}, \cite{Moser}) as well as other exponential Diophantine equations and Stirling numbers of the second kind (in~\cite{kellner}).

A component of most proofs of Theorem~\ref{THM: HW119} is Fermat's little theorem \cite[Theorem~71]{hw}, \cite[p.~36]{rose}, which says that if $p$ is prime, then
\begin{equation}
     p \nmid a \; \Longrightarrow \; a^{p-1} \equiv 1\pmod{p}. \label{flt}
\end{equation}

To prove the nontrivial case $p-1 \nmid n$, another component is needed. The usual proof \cite[Theorem~119]{hw}, \cite[Lemma~1]{mpim} relies on the theory of primitive roots. (It gives an integer $g$ such that \mbox{$g^n\not\equiv 1 \pmod{p}.$} Then $p\nmid g$, implying $S_n(p) \equiv g^n S_n(p) \pmod{p}$, and we infer that $p\mid S_n(p).$) Another proof \cite[Lemma~2]{tophat}, due to Zagier, invokes Lagrange's theorem (see \cite[p.~39]{rose}) on roots of polynomials over $\mathbb{Z}/p\mathbb{Z}$. (Using it, Zagier deduces the existence of an integer~$g$ with $g^n\not\equiv 1 \pmod{p}.$) Still a third proof~\cite{carlitz} employs Bernoulli numbers and finite differences.

In this note, we give a very elementary proof of both cases of Theorem~\ref{THM: HW119}, using a recurrence for the sequence of power sums $S_0(a),S_1(a), \dots$ proven by Pascal~\cite{Pascal} in $1654$ (see \cite[p.~82]{edwards}).\\

\noindent {\bf  Pascal's Identity.}
{\it If $n\ge0$ and $a\ge1$, then}
\begin{align}
	\sum_{k=0}^n \binom{n+1}{k} S_k(a)
		= (a+1)^{n+1} - 1. \label{EQ: P identity}
\end{align}

\begin{proof}
For $j>0$, the binomial theorem gives
\begin{align*}
	(j+1)^{n+1} - j^{n+1} = \sum_{k=0}^n \binom{n+1}{k} j^k.
\end{align*}
Summing from $j=1$ to $a$, the left-hand side telescopes to $(a+1)^{n+1} - 1$, and by \eqref{EQ: def sigma} we get the desired identity.
\end{proof}

\begin{proof}[Proof of Theorem~\ref{THM: HW119}]
The case $p-1 \mid n$ follows easily from \eqref{flt}.

To prove the second case, suppose on the contrary that $p-1 \nmid n$ (so $p>2$) but $p \nmid S_n(p)$. Let $n$ be the smallest such number and write $n=d(p-1)+r$, where $d\ge0$ and $0 < r < p-1$. Now \eqref{flt} yields $S_n(p) \equiv S_r(p)\pmod{p}$, and the minimality of $n$ implies first that $n = r$ and then that $p \mid S_k(p)$ for $k  < n$ (note that $S_0(p)=p$). Hence \eqref{EQ: P identity} with  $a=p$ implies $p\mid (n+1) S_n(p)$. But then prime $p>n+1$ forces $p \mid S_n(p)$, a~contradiction. This completes the proof.
\end{proof}

As a bonus, Pascal's identity allows a simple proof of a congruence for certain sums of binomial coefficients $\binom{m}{k}$ (generalizing the easily-established facts that $\sum_{k=1}^{m-1} \binom{m}{k}$ is even for $m>0$, and that if prime $p\le m\le 2(p-1)$, then $p$~divides $\binom{m}{p-1}$). The case $m$ odd is due to Hermite \cite{hermite} in 1876, and the general case to Bachmann \cite[p.~46]{bachmann} in 1910; for these and related results, see \cite[pp.~270--275]{dickson}.

\begin{cor}[\textbf{Hermite and Bachmann}] \label{COR: P mod p}
If $m>0$ and $p$ is prime, then
\begin{align} \label{EQ: hb}
	\sum_{\substack{0 \, <\, k\, < \, m,\\ p-1\, \mid\, k}} \hspace{-.4em}\binom{m}{k}
		\equiv 0 \pmod{p},
\end{align}
where the sum is over all $k \equiv 0 \pmod{p-1}$ with $1\le k \le m-1$.
\end{cor}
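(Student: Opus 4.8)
The plan is to specialize Pascal's identity \eqref{EQ: P identity} at $a = p$ and reduce it modulo $p$, letting Theorem~\ref{THM: HW119} annihilate all but the relevant terms. Concretely, take $n = m-1$ (legitimate since $m>0$ forces $n\ge0$) and $a = p$ in \eqref{EQ: P identity}, which gives
\[
	\sum_{k=0}^{m-1} \binom{m}{k} S_k(p) = (p+1)^{m} - 1 .
\]
Because $p+1 \equiv 1 \pmod p$, we have $(p+1)^m \equiv 1 \pmod p$, so the right-hand side is $\equiv 0 \pmod p$.

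Next I would evaluate the left-hand side modulo $p$ term by term. The $k=0$ summand is $\binom{m}{0}S_0(p) = p \equiv 0 \pmod p$. For each $k$ with $1 \le k \le m-1$, Theorem~\ref{THM: HW119} says $S_k(p) \equiv -1 \pmod p$ when $p-1 \mid k$ and $S_k(p) \equiv 0 \pmod p$ otherwise. Hence modulo $p$ the whole sum collapses to $-\sum_{0<k<m,\ p-1\mid k} \binom{m}{k}$, and equating this with $0$ is precisely the congruence \eqref{EQ: hb}.

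There is no genuine obstacle here; once Pascal's identity and Theorem~\ref{THM: HW119} are in hand, the argument is pure bookkeeping. The only points that warrant a moment's care are that the boundary term $S_0(p) = p$ is itself divisible by $p$ (so it contributes $0$, not $1$), that $(p+1)^m \equiv 1 \pmod p$, and that the summation range $1 \le k \le m-1$ inherited from \eqref{EQ: P identity} lines up exactly with the condition $0 < k < m$ in the statement of the corollary.
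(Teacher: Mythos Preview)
Your proof is correct and follows exactly the route the paper takes: specialize Pascal's identity with $n=m-1$, $a=p$, reduce modulo $p$, and apply Theorem~\ref{THM: HW119}. You have simply spelled out the bookkeeping that the paper leaves to the reader.
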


\begin{proof}
Set $n=m-1$ and $a=p$ in \eqref{EQ: P identity}, then reduce modulo $p$. Using Theorem~\ref{THM: HW119}, the result follows.
\end{proof}

For example,
\begin{align*}
	\binom{14}{4} + \binom{14}{8} + \binom{14}{12} = 1001+ 3003+ 91 \equiv 0 \pmod{5}.
\end{align*}
 
For \eqref{EQ: hb} and generalizations due to Glaisher and Carlitz, see \cite[p.~70, Lemma 9.5.28; p.~133, Exercise~62; and p.~327, Proposition~11.4.11]{cohen}. Recently, Dilcher~\cite{dilcher} discovered an analog of \eqref{EQ: hb} for \emph{alternating} sums.

\paragraph{Acknowledgments.} We thank the three referees for several suggestions and references, and Pieter Moree for sending us a preprint of \cite{tophat}.

\bigskip

\noindent\textit{55 Lessard Avenue, Toronto, Ontario, Canada M6S 1X6\\
kieren@alumni.rice.edu}

\bigskip

\noindent\textit{209 West 97th Street, New York, NY 10025\\
jsondow@alumni.princeton.edu}

\end{document}